\DeclareMathAlphabet{\mathcal}{OMS}{cmsy}{m}{n} 
\newtheorem*{theorem}{Theorem}
\newtheorem{proposition}{Proposition}
\newtheorem{lemma}{Lemma}
\newtheorem*{conjecture}{Conjecture}
\def\liebrack  {\ensuremath{[\,\cdot\, , \cdot\,]}}
\def\tb{\vcenter{\hbox{\tiny$\>\bullet\>$}}}
\newcommand{\set}[2]{\ensuremath{\{ #1 \>|\> #2 \}}}
\DeclareMathOperator{\B}{B}
\DeclareMathOperator{\C}{C}
\DeclareMathOperator{\dcobound}{d}
\DeclareMathOperator{\Der}{Der}
\DeclareMathOperator{\gl}{\mathsf{gl}}
\DeclareMathOperator{\Homol}{H}
\DeclareMathOperator{\im}{Im}
\DeclareMathOperator{\Ker}{Ker}
\DeclareMathOperator{\Z}{Z}
\begin{document}

\title{On anticommutative algebras for which $[R_a,R_b]$ is a derivation}

\author{Ivan Kaygorodov}
\address{Federal University of ABC, Brazil}
\email{kaygorodov.ivan@gmail.com}

\author{Pasha Zusmanovich}
\address{University of Ostrava, Czech Republic}
\email{pasha.zusmanovich@gmail.com}

\date{First written December 1, 2020; last revised January 17, 2021}
\thanks{J. Geom. Phys., to appear}

\keywords{Commutator; derivation; anticommutative algebra; Lie algebra; 
cohomology; central extension}
\subjclass[2020]{17A30; 17A36; 17B40; 17B55}

\begin{abstract}
We study anticommutative algebras with the property that commutator of any
two multiplications is a derivation.
\end{abstract}

\maketitle

\section*{Introduction}

Consider the following property of a (nonassociative) algebra: the commutator of
two (say, right) multiplications is a derivation of an algebra. Commutative
algebras with this property were studied in the literature under the names
``Lie triple algebras'' and ``almost Jordan algebras'': see \cite{bertram}, 
\cite{dzhu-comm}, \cite{hp}, \cite{jord-ruhaak}, \cite{sidorov}, and references
therein. Jordan algebras are properly contained in this class.

It appears only natural to consider then the anticommutative analog: that is, 
anticommutative algebras in which the commutator of any two multiplications is
a derivation. Such algebras are dubbed, for no better term, as CD algebras. It 
turns out that the variety of CD algebras lies between Lie algebras and 
binary Lie algebras, both inclusions being strict, so it seems a class of 
algebras worth studying.

Earlier, low-dimensional nilpotent CD algebras were classified in 
\cite{cd-nilp}, \cite{cd-nilp1}, \cite{kk}, and \cite{kk2}, and here we continue
the study of CD algebras.

\section{Notation, conventions, preliminary facts}

We consider (nonassociative) algebras over the ground field $K$, which is 
assumed to be arbitrary of characteristic $\ne 2,3$. \emph{All algebras and 
varieties of algebras are assumed to be anticommutative, without explicitly 
mentioning it}. Thus, left and right multiplications differ only by sign, left 
and right ideals coincide, etc.
	
The multiplication in algebras will be always denoted by juxtaposition, with the
exception of \S \ref{sec-ext}, where we deal with Lie algebras with 
multiplication denoted customarily by brackets $\liebrack$.

Let $A$ be an algebra. For an element $a \in A$, $R_a: A \to A$ denotes the 
linear map of right multiplication on $a$: $R_a(x) = xa$. $\Der(A)$ denotes the
Lie algebra of derivations of $A$, and $\gl(A)$ denotes the Lie algebra of all 
linear maps $A \to A$, subject to the usual commutator 
$[f,g] = g \circ f - f \circ g$ (we always assume action from the right, so 
$g \circ f(a) = g(f(a))$). For any algebra $A$, any $a\in A$, and any 
$D \in \Der(A)$, the following identity in $\gl(A)$ holds: 
\begin{equation}\label{eq-rda}
[D,R_a] = R_{D(a)} .
\end{equation}

$Z(A)$ denotes the center of $A$, i.e., the set of elements $z \in A$ such that 
$zA = Az = 0$; this is obviously an ideal of $A$. 

The \emph{Jacobiator} $J(x,y,z)$ of elements $x,y,z \in A$ is defined as
$$
J(x,y,z) = (xy)z + (zx)y + (yz)x .
$$
Thus, an algebra is a Lie algebra if and only if the Jacobiator on it is 
identically zero.

An algebra $A$ is called a \emph{CD algebra} if it satisfies the property that 
for any $a,b\in A$, the commutator $[R_a, R_b]$ is a derivation of $A$. This 
condition can be written as a homogeneous identity of degree $4$ comprising $6$
monomials:
\begin{equation}\label{eq-6}
((xy)a)b - ((xy)b)a - ((xa)b)y + ((xb)a)y + ((ya)b)x - ((yb)a)x = 0 .
\end{equation}

\section{Identities and non-identities}\label{s-i}

Let us compare the variety of CD algebras with the other known varieties: Lie, 
binary Lie, Malcev, and Sagle (for binary Lie and Malcev algebras, see, for 
example, \cite{sagle-malcev}, \cite{sagle-binlie}, \cite{grishkov}, 
\cite{arenas-shest}, \cite{cd-nilp} and references therein, and for Sagle 
algebras, see \cite{filippov} and references therein). Let us briefly recall 
their definitions. An algebra is called \emph{binary Lie} if it satisfies the identity
$$
J(xy, x, y) = 0 .
$$
Taking into account anticommutativity, the latter identity is equivalent to
\begin{equation}\label{eq-binlie}
((xy)x)y = ((xy)y)x .
\end{equation}
This is also equivalent to the condition that any $2$-generated subalgebra is 
Lie.

An algebra is called \emph{Malcev}, if it satisfies the identity
$$
J(x,y,xz) = J(x,y,z)x ;
$$
and an algebra is called \emph{Sagle}, if it satisfies the identity
\begin{equation}\label{eq-sagle}
J(x,y,z)w = J(w,z,xy) + J(w,y,zx) + J(w,x,yz) .
\end{equation}

There are the following well known strict inclusions between these varieties:
$$
\begin{array}{rcl}
           && \text{Malcev} \>\subset\> \text{ Binary Lie} \\
           & \rotatebox{45}{$\subset$}  &                                   \\
\text{Lie} &                                                                \\
           & \rotatebox{-45}{$\subset$} &                                   \\
           &                            & \text{Sagle}
\end{array}
$$

How CD algebras fit into the picture? We are going to prove that
\begin{equation}\label{eq-inc}
\begin{array}{rcl}
&& \text{Binary Lie}                                  \\
& \rotatebox{45}{$\subset$}                           \\
\text{Lie} \>\subset\> \text{Malcev} \cap \text{Sagle} \>\subset\> \text{CD} \\
& \rotatebox{-45}{$\subset$}                          \\
&& \text{Almost Lie}
\end{array}
\end{equation}
where all inclusions are, again, strict, and, moreover,
\begin{equation}\label{eq-eq}
\begin{gathered}
\text{\rm Malcev} \>\cap\> \text{\rm CD} = 
\text{\rm Sagle} \>\cap\> \text{\rm CD} = 
\text{\rm Malcev} \>\cap\> \text{\rm Sagle}
\\
\text{\rm Binary Lie} \>\cap\> \text{\rm Almost Lie} = \text{\rm CD}
\end{gathered}
\end{equation}
(the graphically inclined reader may wish to draw Venn diagrams representing all
this).

Here the variety of ``Almost Lie'' (anticommutative) algebras is defined by
the identity\footnote{
There are other meanings of ``almost Lie'' one can encounter in the literature,
but since neither of them seems to be widespread and accepted, and since almost
Lie algebras in our sense are already mentioned in a number of related papers --
for example \cite{kk} and \cite{kk2} -- we choose to keep this terminology. 
}
\begin{equation}\label{eq-alie}
J(x,y,z)w = 0 .
\end{equation}

\begin{lemma}\label{lemma-1}
The variety {\rm Malcev} $\>\cap\>$ {\rm Sagle} coincides with the variety of 
almost Lie algebras which additionally satisfy the identity
\begin{equation}\label{eq-4}
J(x,y,zw) = 0 .
\end{equation}
\end{lemma}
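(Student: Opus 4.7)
The direction $\supseteq$ is routine: if $J(x,y,z)w = 0$ (almost Lie) and $J(x,y,zw) = 0$, then every summand on the right-hand side of Sagle (each of the form $J(w,\cdot,\cdot\cdot)$ with a product in the third slot) vanishes by the second identity, while the left-hand side vanishes by almost Lie; likewise both sides of Malcev vanish (the term $J(x,y,xz)$ vanishes by the second identity, and $J(x,y,z)x$ by almost Lie).

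For the reverse inclusion, I would first substitute $w = x$ into Sagle, obtaining $J(x,y,z)x = J(x,z,xy) + J(x,y,zx) + J(x,x,yz)$. The third term vanishes by alternation of $J$, Malcev applied to $(x,z,y)$ gives $J(x,z,xy) = J(x,z,y)x = -J(x,y,z)x$, and Malcev combined with anticommutativity gives $J(x,y,zx) = -J(x,y,xz) = -J(x,y,z)x$. Hence $J(x,y,z)x = -2J(x,y,z)x$, so $3J(x,y,z)x = 0$ and $J(x,y,z)x = 0$ (using $\mathrm{char}\, K \ne 3$). Linearizing in $x$ yields $J(x,y,z)w + J(w,y,z)x = 0$; combined with the alternation of $J$, this shows that $J(x,y,z)w$ is totally antisymmetric in $(x,y,z,w)$.

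Malcev now gives $J(x,y,xz) = J(x,y,z)x = 0$, and linearizing in $x$ yields $J(x,y,uz) + J(u,y,xz) = 0$; together with the alternation of $J$ in its first two arguments and the anticommutativity of the product in the third slot, $J(x,y,cd)$ is also totally antisymmetric as a function of its four algebra-element arguments. Using this total antisymmetry, each of the three terms on the right-hand side of Sagle corresponds to a permutation of sign $-1$ of the canonical tuple $(x,y,z,w)$, so Sagle collapses to the scalar identity $J(x,y,z)w = -3J(x,y,zw)$.

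The main obstacle is to deduce that both sides of this scalar identity vanish individually rather than merely being proportional. I would specialize $w = xy$ in Sagle: the summand $J(xy,z,xy)$ vanishes by alternation, and the remaining two each reduce to $-J(x,y,z(xy))$ via the total antisymmetries, yielding $J(x,y,z)(xy) = -2J(x,y,z(xy))$; comparing with $J(x,y,z)(xy) = -3J(x,y,z(xy))$ (from the scalar identity at $w=xy$) forces $J(x,y,z(xy)) = 0$. Successive linearizations of this three-variable identity, combined with the established total antisymmetries, should then propagate to the universal identity $J(x,y,zw) = 0$; the almost Lie identity follows as $J(x,y,z)w = -3J(x,y,zw) = 0$.
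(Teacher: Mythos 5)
Your easy direction and the bulk of the hard direction are correct, and the route is genuinely different from the paper's: the paper simply quotes Sagle's identity (\ref{eq-msagle}), valid in every Malcev algebra, substitutes it into the right-hand side of (\ref{eq-sagle}) to obtain (\ref{eq-alie}), and then reads (\ref{eq-4}) off (\ref{eq-msagle}) once more. You instead derive from scratch that both $G(x,y,z,w):=J(x,y,z)w$ and $F(x,y,z,w):=J(x,y,zw)$ are totally antisymmetric and that (\ref{eq-sagle}) then collapses to $G=-3F$; all of this checks out (including the use of $\operatorname{char}K\ne 3$ to divide by $3$), and even the substitution $w=xy$ correctly yields $J(x,y,z(xy))=0$.

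The gap is in the final step. The identity $J(x,y,z(xy))=0$ is homogeneous of total degree $5$ (multidegree $(2,2,1)$ in $x,y,z$), and linearization preserves total degree: every identity produced from it by successive linearizations again has degree $5$ and constrains $F(p,q,r,s)$ only when one of the arguments is an honest product of two others. Since $A\ne AA$ in the free algebra, no family of degree-$5$ identities can imply the degree-$4$ identity $F(x,y,z,w)=0$ with $w$ a free variable, so the proposed propagation cannot succeed. Fortunately you do not need it: total antisymmetry already forces a second, independent linear relation between $F$ and $G$. The sign representation of $S_4$ occurs with multiplicity one in the $15$-dimensional multilinear degree-$4$ component of the free anticommutative algebra (it lies in the span of the monomials $((ab)c)d$; the span of the monomials $(ab)(cd)$ carries the representation $\mathrm{standard}\otimes\mathrm{sign}$ and contains no copy), so every totally antisymmetric element there is a multiple of $e:=\frac{1}{24}\sum_{\sigma}\operatorname{sgn}(\sigma)\,((\sigma(x)\sigma(y))\sigma(z))\sigma(w)$. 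Modulo the antisymmetry relations one computes $G\equiv 3e$ and $F\equiv 2e$, whence $3F=2G$ --- which is exactly what (\ref{eq-msagle}) reduces to once your antisymmetries are imposed. Together with $G=-3F$ this gives $9F=0$, hence $F=G=0$ in characteristic $\ne 3$, i.e., (\ref{eq-4}) and (\ref{eq-alie}).
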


\begin{proof}
As proved in 
\cite[Lemma 2.10]{sagle-malcev}, any Malcev algebra satisfies the identity
\begin{equation}\label{eq-msagle}
3J(y,z,wx) = J(x,y,z)w - J(y,z,w)x - 2J(z,w,x)y + 2J(w,x,y)z .
\end{equation}

Expressing via this identity all the summands on the right-hand side of 
(\ref{eq-sagle}), which are of the form $J(\tb,\tb,\tb\tb)$, through terms of 
the form $J(\tb,\tb,\tb)\tb$, we get the identity (\ref{eq-alie}), i.e., any 
algebra which is both Malcev and Sagle, is almost Lie. Then (\ref{eq-msagle}) 
implies that any algebra which is both Malcev and Sagle, satisfies also the
identity (\ref{eq-4}).

Conversely, any algebra satisfying both (\ref{eq-alie}) and (\ref{eq-4}) is,
obviously, both Malcev and Sagle.
\end{proof}

\begin{lemma}\label{lemma-bacd}
{\rm Binary Lie} $\>\cap\>$ {\rm Almost Lie} $\subseteq$ {\rm CD}.
\end{lemma}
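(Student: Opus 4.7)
The plan is to reformulate the CD identity $(\ref{eq-6})$ as the vanishing of a single quantity
\[
\Psi(a,b,x,y) \;:=\; J(x,y,ab) + J(xy,a,b),
\]
and then combine the two hypotheses to force $\Psi \equiv 0$. First, I verify that $D := [R_a,R_b]$ satisfies $D(u) = R_{ab}(u) + J(u,a,b)$ by direct expansion, and that $R_{ab}(xy) - R_{ab}(x)y - x R_{ab}(y) = J(x,y,ab)$ holds in any anticommutative algebra. The almost Lie identity $(\ref{eq-alie})$ annihilates the cross terms $J(x,a,b)\cdot y$ and $x\cdot J(y,a,b)$, so $D(xy) - D(x)y - xD(y) = \Psi(a,b,x,y)$; in other words, CD is equivalent to $\Psi \equiv 0$.

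The crux of the argument is a second representation of the same CD defect. Applying $J(x,y,a)\cdot b = 0$ and $J(x,y,b)\cdot a = 0$ (both instances of almost Lie) to the first two summands of $(\ref{eq-6})$ and regrouping the result as a sum of commutators yields
\[
\Psi(a,b,x,y) \;=\; [R_y,R_b](xa) - [R_x,R_b](ya) - [R_y,R_a](xb) + [R_x,R_a](yb) \;=\; \Psi(y,b,x,a) - \Psi(x,b,y,a),
\]
so the identity $\Psi(a,b,x,y) = \Psi(y,b,x,a) - \Psi(x,b,y,a)$ is forced by almost Lie alone.

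It remains to show that $\Psi$ is alternating in its four arguments; binary Lie supplies exactly what is needed. The antisymmetries in $(a,b)$ and in $(x,y)$, together with the invariance under the swap $(a,b)\leftrightarrow(x,y)$, are immediate from the cyclic and alternating properties of $J$. Polarizing $(\ref{eq-binlie})$ by $x\mapsto x+a$ and then $y\mapsto y+b$ yields the multilinear identity
\[
J(xy,a,b) + J(xb,a,y) + J(ay,x,b) + J(ab,x,y) = 0,
\]
which rearranges to $\Psi(a,b,x,y) + \Psi(x,b,a,y) = 0$. Together with the previous symmetries this generates the sign character of $S_4$ acting on the four arguments of $\Psi$. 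Plugging the alternating property into the key identity gives $\Psi(a,b,x,y) = -\Psi(a,b,x,y) - \Psi(a,b,x,y)$, so $3\,\Psi(a,b,x,y) = 0$, and the standing assumption that the characteristic is different from $3$ forces $\Psi \equiv 0$. The only nontrivial step is spotting the invariant $\Psi$ together with its two distinct representations; after that, binary Lie delivers the alternating symmetry and the rest is bookkeeping.
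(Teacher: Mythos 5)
Your proof is correct, and it takes a genuinely different route from the paper's. Both arguments ultimately hinge on the same identity: your $\Psi(a,b,x,y)=J(x,y,ab)+J(xy,a,b)$ is exactly the left-hand side of the paper's identity (\ref{eq-jj}), $J(wx,y,z)+J(yz,w,x)=0$. The paper obtains (\ref{eq-jj}) by quoting Sagle's identity $3(J(wx,y,z)+J(yz,w,x))=-J(x,y,z)w+\dots$ from \cite[\S 3]{sagle-binlie}, valid in every binary Lie algebra, and then kills its right-hand side by almost Lie; afterwards it uses (\ref{eq-jj}) to rearrange (\ref{eq-6}) into $J(x,y,a)b-J(x,y,b)a=0$. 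You instead prove $\Psi\equiv 0$ from scratch: you identify the CD defect $D(xy)-D(x)y-xD(y)$ for $D=[R_a,R_b]$ directly with $\Psi(a,b,x,y)$ modulo almost Lie (all your intermediate computations check out, including $[R_a,R_b](u)=R_{ab}(u)+J(u,a,b)$ and the commutator regrouping giving $J(xa,y,b)-J(ya,x,b)-J(xb,y,a)+J(yb,x,a)=\Psi(y,b,x,a)-\Psi(x,b,y,a)$), then show via the full linearization of (\ref{eq-binlie}) that $\Psi$ is alternating — the transpositions $(ab)$, $(xy)$, $(ax)$ do generate $S_4$ and each acts by its sign, so the symmetry argument is sound — and conclude $3\Psi=0$. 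What your version buys is self-containedness (no appeal to \cite{sagle-binlie}) and an explicit localization of where the hypothesis $\operatorname{char}K\neq 3$ enters; the coefficient $3$ that you exhibit at the end is the same one hidden inside Sagle's cited identity in the paper's proof. The cost is a somewhat longer verification, but every step is elementary.
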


\begin{proof}
As proved in 
\cite[\S 3]{sagle-binlie}, any binary Lie algebra satisfies the identity
$$
3(J(wx,y,z) + J(yz,w,x)) = - J(x,y,z)w + J(y,z,w)x - J(z,w,x)y + J(w,x,y)z .
$$

If the algebra is simultaneously almost Lie, the right-hand side of this 
identity vanishes, and we are left with the identity
\begin{equation}\label{eq-jj}
J(wx,y,z) + J(yz,w,x) = 0 .
\end{equation}

Using the last identity and anticommutativity, we get
\begin{multline}\label{eq-44}
((xz)w)y + ((yw)z)x \\= 
  J(xz,w,y) + ((xz)y)w - (wy)(xz) 
+ J(yw,z,x) + ((yw)x)z - (zx)(yw) \\=
((xz)y)w + ((yw)x)z
\end{multline}
for any elements $x,y,z,w$ of an algebra which is simultaneously binary Lie and
almost Lie.

Now transform the left-hand side of (\ref{eq-6}):
\begin{multline*}
((xy)a)b - ((xy)b)a - ((xa)b)y + ((xb)a)y + ((ya)b)x - ((yb)a)x 
\\=
  ((xy)a)b - ((xy)b)a
- ((xa)y)b - ((yb)x)a
+ ((xb)y)a + ((ya)x)b
\\= 
  J(x,y,a)b
- J(x,y,b)a = 0 ,
\end{multline*}
where the first equality is obtained by applying the identity (\ref{eq-44}) 
twice, to the pairs formed by the 3rd and 6th summands, and by the 4th and 5th
summands.
\end{proof}

\begin{proposition}\label{prop-1}
All inclusions in the diagram (\ref{eq-inc}) do indeed take place.
\end{proposition}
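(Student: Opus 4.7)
The plan is to verify the four inclusions in (\ref{eq-inc}) in turn, then exhibit separating examples for strictness. Two inclusions are essentially free: $\text{Lie} \subseteq \text{Malcev} \cap \text{Sagle}$ holds because every Jacobiator vanishes on a Lie algebra, trivializing both the Malcev identity and (\ref{eq-sagle}); and $\text{Malcev} \cap \text{Sagle} \subseteq \text{CD}$ follows by chaining Lemma \ref{lemma-1} and Lemma \ref{lemma-bacd}, after observing that specializing (\ref{eq-4}) at $z=x$, $w=y$ (using anticommutativity together with $(xy)(xy)=0$) collapses to (\ref{eq-binlie}), placing such algebras inside $\text{Binary Lie} \cap \text{Almost Lie}$. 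For $\text{CD} \subseteq \text{Binary Lie}$ I would substitute $a=x$, $b=y$ into (\ref{eq-6}); the $xx$ and $yy$ monomials vanish, and the rest combine via $(yx)=-(xy)$ into $2((xy)x)y - 2((xy)y)x = 0$, giving (\ref{eq-binlie}) in characteristic $\ne 2$.

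The central computation is $\text{CD} \subseteq \text{Almost Lie}$. My plan is to cyclically permute $(x,y,a) \mapsto (y,a,x) \mapsto (a,x,y)$ in (\ref{eq-6}), keeping $b$ fixed, and add the three resulting instances. Sorting the eighteen monomials by their outermost factor (one of $a, b, x, y$) and unifying signs via anticommutativity, each of the four groups consolidates into a Jacobiator times the common outermost factor, so the sum reduces to
\begin{equation*}
J(x,y,a)b + J(x,y,b)a - J(x,a,b)y + J(y,a,b)x = 0 .
\end{equation*}
The first pair of summands is symmetric in $(a,b)$, while the second pair is antisymmetric in $(a,b)$, so the two pairs must vanish separately: $J(x,y,a)b = -J(x,y,b)a$ and $J(x,a,b)y = J(y,a,b)x$. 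Rewriting the second identity via the cyclic symmetry $J(x,a,b) = J(a,b,x)$ and relabeling shows that the bilinear expression $J(p,q,r)s$ is simultaneously antisymmetric and symmetric under the swap $r \leftrightarrow s$; in characteristic $\ne 2$ this forces $J(p,q,r)s = 0$, which is (\ref{eq-alie}).

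For strictness I would display one concrete witness per inclusion. The classical seven-dimensional simple non-Lie Malcev algebra built from the traceless octonions separates $\text{Lie}$ from $\text{Malcev} \cap \text{Sagle}$, while small-dimensional nilpotent algebras drawn from the classifications in \cite{cd-nilp}, \cite{cd-nilp1}, \cite{kk}, \cite{kk2} supply the remaining three separators (a CD algebra failing either Malcev or Sagle, a binary Lie algebra failing (\ref{eq-alie}), and an almost Lie algebra failing (\ref{eq-binlie})). The main obstacle I anticipate is the combinatorial bookkeeping in the cyclic-summation step for $\text{CD} \subseteq \text{Almost Lie}$, where the eighteen quartic monomials must be grouped correctly before the Jacobiator structure becomes visible; sifting the classification tables for suitable nilpotent witnesses is the secondary source of effort.
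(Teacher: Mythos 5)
Your four inclusion arguments are all correct, and three of them coincide with the paper's: in particular, your specialization of (\ref{eq-4}) at $z=x$, $w=y$ is exactly the unstated justification behind the paper's one-line claim that Lemma \ref{lemma-1} places Malcev $\cap$ Sagle inside Binary Lie $\cap$ Almost Lie before invoking Lemma \ref{lemma-bacd}. Where you genuinely diverge is ``CD $\subseteq$ Almost Lie''. The paper avoids all monomial bookkeeping by working in $\gl(A)$: since $[R_x,R_y]\in\Der(A)$, identity (\ref{eq-rda}) gives $[[R_x,R_y],R_z]=R_{[R_x,R_y](z)}$, so the operator Jacobi identity for $R_x,R_y,R_z$ collapses to the statement that $R$ applied to (twice) the Jacobiator vanishes, i.e.\ $J(x,y,z)\in Z(A)$. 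Your cyclic summation of (\ref{eq-6}) over $(x,y,a)$ with $b$ fixed does yield $J(x,y,a)b + J(x,y,b)a - J(x,a,b)y + J(y,a,b)x = 0$ (I checked the eighteen monomials), and the symmetric/antisymmetric split in $(a,b)$ followed by the $r\leftrightarrow s$ argument is sound in characteristic $\ne 2$. So this step is a valid, purely identity-theoretic alternative: the operator proof is shorter and explains \emph{why} the Jacobiator is central, while yours uses nothing beyond the defining identity and the alternating property of $J$.

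The genuine error is in the strictness witnesses. The $7$-dimensional simple Malcev algebra cannot separate Lie from Malcev $\cap$ Sagle: by the very inclusion Malcev $\cap$ Sagle $\subseteq$ CD you have just proved, combined with Proposition \ref{prop-simple} (a simple CD algebra is Lie), that algebra is not Sagle --- indeed the paper deploys it on the opposite edge of the diagram, as a binary Lie algebra that is not CD. The paper's witness is instead the free anticommutative algebra on three generators with all products of four elements set to zero: being nilpotent of class three, it satisfies every multilinear identity of degree four (hence is Malcev and Sagle), yet $J(x,y,z)\ne 0$. Your plan for separating CD from Almost Lie is also off target: the tables in \cite{cd-nilp}, \cite{cd-nilp1}, \cite{kk}, \cite{kk2} classify binary Lie and CD algebras, and since CD $=$ Binary Lie $\cap$ Almost Lie, they cannot contain an almost Lie algebra that fails to be CD. The paper instead constructs such an example in \S\ref{sec-ext}, as a one-dimensional central extension of a small solvable Lie algebra by a skew form that is not a CD $2$-cocycle.
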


\begin{proof}
``Lie $\subset$ Malcev $\cap$ Sagle'': Obvious. To see that the inclusion is
strict (we do not need this in what follows, but doing this for completeness), 
consider the free anticommutative algebra freely generated by elements $x,y,z$,
such that any product of any $4$ elements vanishes. This $9$-dimensional 
nilpotent algebra satisfies any identity of degree $4$, in particular, it is 
both Malcev and Sagle, but, obviously, not Lie, as $J(x,y,z) \ne 0$.

\smallskip

``Malcev $\cap$ Sagle $\subset$ CD'': By Lemma \ref{lemma-1}, 
$$
\text{\rm Malcev} \>\cap\> \text{\rm Sagle} \subseteq 
\text{\rm Binary Lie} \>\cap\> \text{\rm Almost Lie} ,
$$
and then apply Lemma \ref{lemma-bacd}.

To show that the inclusion is strict, on can take, for example, the 
one-parametric family of nilpotent algebras ${\bf B}_{6,1}^\alpha$ from \cite{cd-nilp}
(see Theorems 3 and 10 there). These are $6$-dimensional algebras with the basis
$\{e_i\}_{i=1,\dots,6}$ and multiplication table
$$
e_1 e_2 = e_4, \quad e_1 e_3 = e_5, \quad e_2 e_3 = \alpha e_6, \quad 
e_4 e_5 = e_6 ,
$$
where $\alpha \in K$; these algebras are CD, but not Malcev. 

\smallskip

``CD $\subset$ Binary Lie'': As noted in \cite{cd-nilp}, substituting $a=x$ and 
$b=y$ in (\ref{eq-6}) yields (\ref{eq-binlie}). It is not difficult to find 
examples of binary Lie algebras which are not CD; for nilpotent such algebras, see, again, \cite{cd-nilp}. Another nice example is a $7$-dimensional simple 
Malcev algebra (what follows from Proposition \ref{prop-simple} below).

\smallskip

``CD $\subset$ Almost Lie'': Let $A$ be a CD algebra. Write the Jacobi identity
for elements of $\gl(A)$:
\begin{equation}\label{jacobi}
[[R_x,R_y],R_z] + [[R_z,R_x],R_y] + [[R_y,R_z],R_x] = 0
\end{equation}
for any $a,b,c\in A$. 

Write the identity (\ref{eq-rda}) for the derivation $[R_x,R_y]$:
$$
[[R_x,R_y],R_z] = R_{[R_x,R_y](z)}
$$
for any $x,y,z \in A$. Taking the last identity into account, the identity 
(\ref{jacobi}) can be rewritten as
$$
[R_x,R_y](z) + [R_z,R_x](y) + [R_y,R_z](x) \in Z(A).
$$

Due to anticommutativity of $A$ and the fact that the characteristic of the 
ground field is different from $2$, the left-hand side in the last inclusion is
nothing but the Jacobiator $J(x,y,z)$, whence the statement.

Examples of almost Lie algebras which are not CD can be constructed by 
considering central extensions of Lie algebras in the suitable variety; this
is deferred to \S \ref{sec-ext}\footnote{
An alternative method would be to use Albert \cite{albert} to construct 
explicitly the multiplication table of a suitable finite-dimensional homomorphic
image of a free almost Lie algebra, and then verify in some other 
general-purpose computer algebra system like GAP, that this homomorphic image is
not CD. A similar procedure -- for another set of identities -- is described more thoroughly in \cite[\S 4]{mock-lie}.
}.
\end{proof}

\begin{proposition}
All equalities in (\ref{eq-eq}) do indeed take place.
\end{proposition}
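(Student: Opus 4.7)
My plan is to verify the two assertions in (\ref{eq-eq}) separately. The second equality, Binary Lie $\cap$ Almost Lie $=$ CD, is already in hand: ``$\subseteq$'' is Lemma~\ref{lemma-bacd}, while ``$\supseteq$'' simply combines the parts ``CD $\subset$ Binary Lie'' and ``CD $\subset$ Almost Lie'' already established in Proposition~\ref{prop-1}. For the chain Malcev $\cap$ CD $=$ Sagle $\cap$ CD $=$ Malcev $\cap$ Sagle, the ``$\supseteq$'' inclusions follow at once from Proposition~\ref{prop-1}, which furnishes Malcev $\cap$ Sagle $\subseteq$ CD; so only the two ``$\subseteq$'' containments require genuine work.

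For Malcev $\cap$ CD $\subseteq$ Malcev $\cap$ Sagle I would take $A$ that is simultaneously Malcev and CD. Since CD implies almost Lie by Proposition~\ref{prop-1}, every term on the right-hand side of the Malcev identity (\ref{eq-msagle}) vanishes, leaving $3J(y,z,wx) = 0$, hence $J(y,z,wx) = 0$ as char $K \ne 3$. This is precisely (\ref{eq-4}), so Lemma~\ref{lemma-1} locates $A$ in Malcev $\cap$ Sagle.

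The genuinely technical step is Sagle $\cap$ CD $\subseteq$ Malcev $\cap$ Sagle. By the second equality of the proposition, such an $A$ is simultaneously Sagle, binary Lie and almost Lie. Applying the almost Lie identity to the left-hand side of (\ref{eq-sagle}) reduces the Sagle identity to
$$
J(w,z,xy) + J(w,y,zx) + J(w,x,yz) = 0 . \quad (\ast)
$$
Setting $w=x$ in $(\ast)$ and invoking the total antisymmetry of the Jacobiator on an anticommutative algebra (so that $J(x,x,yz)=0$, and $J(x,y,zx) = -J(x,y,xz)$ since $zx = -xz$) yields the symmetric relation $J(x,z,xy) = J(x,y,xz)$. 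Independently, I linearize the binary Lie identity $J(xy,x,y)=0$ in the argument $y$ (replace $y$ by $y+z$ and extract the bilinear part) and then use the cyclic invariance of $J$ to rewrite the outcome as the skew relation $J(x,z,xy) + J(x,y,xz) = 0$. Adding these two relations gives $2J(x,y,xz) = 0$, so $J(x,y,xz) = 0$ as char $K \ne 2$; in the presence of almost Lie this is exactly the Malcev identity, which is what was required. The step I expect to be the main obstacle is this last combination: one has to recognize that the Sagle identity (once cut down by almost Lie) and the linearized binary Lie identity supply, respectively, the symmetric and the skew part of the bilinear expression $(y,z) \mapsto J(x,y,xz)$, which is what forces it to vanish.
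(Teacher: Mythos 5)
Your proof is correct, and for two of the three pieces it coincides with the paper's: the equality Binary Lie $\cap$ Almost Lie $=$ CD is handled exactly as in the paper (Lemma \ref{lemma-bacd} plus the inclusions from Proposition \ref{prop-1}), and your argument for Malcev $\cap$ CD $\subseteq$ Malcev $\cap$ Sagle (pass to Malcev $\cap$ Almost Lie, kill the right-hand side of (\ref{eq-msagle}) to get (\ref{eq-4}), invoke Lemma \ref{lemma-1}) is the paper's argument verbatim. The genuine divergence is in Sagle $\cap$ CD $\subseteq$ Malcev $\cap$ Sagle. Both you and the paper first reduce the Sagle identity, via almost Lie, to $J(w,z,xy)+J(w,y,zx)+J(w,x,yz)=0$; but the paper then permutes $z$ and $w$ and uses the identity (\ref{eq-jj}) -- which it imports from Sagle's structure theory of binary Lie algebras in the proof of Lemma \ref{lemma-bacd} -- to extract the full identity (\ref{eq-4}) $J(x,y,zw)=0$ and finish through Lemma \ref{lemma-1}. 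You instead specialize $w=x$ to get the symmetric relation $J(x,z,xy)=J(x,y,xz)$, combine it with the skew relation $J(x,z,xy)+J(x,y,xz)=0$ coming from an elementary linearization of (\ref{eq-binlie}), and deduce $J(x,y,xz)=0$, which under almost Lie is literally the Malcev identity. This gives only the ``diagonal'' case of (\ref{eq-4}), but that is all you need since you verify the Malcev identity directly rather than routing through Lemma \ref{lemma-1}; the payoff is that your computation is self-contained, avoiding the appeal to (\ref{eq-jj}) and hence to the cited result of Sagle, at the (negligible) cost of a symmetric/antisymmetric decomposition argument that again uses characteristic $\ne 2$.
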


\begin{proof}
``Binary Lie $\>\cap\>$ Almost Lie = CD'':
The inclusion ``CD $\subseteq$ Binary Lie $\>\cap\>$ Almost Lie'' follows from 
(\ref{eq-inc}), and the inverse inclusion is proved in Lemma \ref{lemma-bacd}.

\smallskip

``Malcev $\>\cap\>$ CD = Malcev $\>\cap\>$ Sagle'': By Lemma \ref{lemma-1}, the
variety Malcev $\>\cap\>$ Sagle coincides with the variety of almost Lie algebras which, additionally, satisfy the identity
(\ref{eq-4}).

On the other hand, by just proved, 
$$
\text{\rm Malcev} \>\cap\> \text{\rm CD} = 
\text{\rm Malcev} \>\cap\> \text{\rm Binary Lie} \>\cap\> \text{\rm Almost Lie}
= \text{\rm Malcev} \>\cap\> \text{\rm Almost Lie}.
$$

But by (\ref{eq-msagle}), any Malcev algebra which is simultaneously almost 
Lie, satisfies (\ref{eq-4}), what shows that
$$
\text{\rm Malcev} \>\cap\> \text{\rm Almost Lie} = 
\text{\rm Malcev} \>\cap\> \text{\rm Sagle} .
$$

\smallskip

``Sagle $\>\cap\>$ CD = Malcev $\>\cap\>$ Sagle'': The inclusion
Malcev $\>\cap\>$ Sagle $\subseteq$ Sagle $\>\cap\>$ CD follows from 
(\ref{eq-inc}), so let us prove the inverse inclusion.

By the already proved, we have
$$
\text{\rm Sagle} \>\cap\> \text{\rm CD} = 
\text{\rm Sagle} \>\cap\> \text{\rm Binary Lie} \>\cap\> \text{\rm Almost Lie} .
$$

The algebra which is simultaneously binary Lie and almost Lie, satisfies 
(\ref{eq-jj}), and by (\ref{eq-sagle}), the algebra which is simultaneously 
Sagle and almost Lie, satisfies the identity
$$
J(w,z,xy) + J(w,y,zx) + J(w,x,yz) = 0 .
$$

Permuting in the last identity $z$ and $w$, and using (\ref{eq-jj}), we get
$$
- J(w,z,xy) + J(w,x,yz) + J(w,y,zx) = 0 .
$$

The last two identities yield the identity (\ref{eq-4}), and thus, by 
Lemma \ref{lemma-1}, 
$$
\text{\rm Sagle} \>\cap\> \text{\rm Binary Lie} \>\cap\> \text{\rm Almost Lie}
\subseteq \text{\rm Malcev} \>\cap\> \text{\rm Sagle} . 
$$
\end{proof}

Perhaps, the most important among all these multiple relations is the inclusion
\begin{equation}\label{eq-inc1}
\text{CD} \subset \text{Almost Lie}
\end{equation}
which shows that, after all, CD algebras are not that far from the Lie ones. 

An immediate corollary of this inclusion is

\begin{proposition}\label{prop-simple}
For any CD algebra $A$, the quotient $A/Z(A)$ is a Lie algebra. In particular, 
any centerless (and, in particular, simple) CD algebra is a Lie algebra.
\end{proposition}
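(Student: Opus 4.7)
The plan is to deduce this essentially for free from the inclusion $\text{CD} \subset \text{Almost Lie}$ just established as (\ref{eq-inc1}). The defining identity of an almost Lie algebra, $J(x,y,z)w = 0$, says that $J(x,y,z)$ lies in the right annihilator of $A$; thanks to anticommutativity, the right annihilator coincides with the (two-sided) center $Z(A)$, so I would conclude that $J(x,y,z) \in Z(A)$ for all $x,y,z \in A$.

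Next, I would pass to the quotient $A/Z(A)$. The Jacobiator is a multilinear expression in the multiplication, so its value on cosets is the coset of its value on any lifts, and by the previous step this coset is zero. Together with the anticommutativity inherited from $A$, this makes $A/Z(A)$ into a Lie algebra, which is the first assertion.

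Finally, for the ``in particular'' clauses: a centerless CD algebra $A$ has $Z(A)=0$, hence coincides with its own Lie quotient. For a simple $A$, the center is an ideal and cannot equal all of $A$ (which would force the multiplication to vanish identically, contradicting simplicity under the usual convention that simple algebras are non-abelian), so again $Z(A)=0$ and we reduce to the previous case.

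There is no genuinely hard step here: all the real content is already absorbed in the inclusion $\text{CD} \subset \text{Almost Lie}$ proved above. The only point that merits explicit mention is the equivalence, under anticommutativity, between the one-sided condition $J(x,y,z)w = 0$ and membership of $J(x,y,z)$ in the two-sided center $Z(A)$.
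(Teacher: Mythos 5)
Your argument is correct and is exactly the one the paper intends: the proposition is stated there as an ``immediate corollary'' of the inclusion $\text{CD} \subset \text{Almost Lie}$ from (\ref{eq-inc1}), and your expansion --- $J(x,y,z)w=0$ forces $J(x,y,z)\in Z(A)$ by anticommutativity, so the Jacobiator vanishes in $A/Z(A)$ --- fills in precisely the steps the authors leave implicit. Nothing is missing.
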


Thus, CD algebras are, essentially, central extensions, in the suitable variety,
of Lie algebras. As central extensions should be described by second degree
cohomology, this suggests that there should be a ``CD cohomology'', extending
the usual Chevalley--Eilenberg cohomology, responsible for such central 
extensions. And indeed, such cohomology is constructed in \S \ref{sec-coho}.

It is natural to ask whether any simple binary Lie algebra is Malcev (and hence
is either Lie, or is a $7$-dimensional simple Malcev algebra). For 
finite-dimensional algebras over a field of characteristic zero the question was
answered in affirmative in \cite{grishkov}, while the cases of positive 
characteristic, and of infinite-dimensional algebras remain open (see, e.g., 
\cite[Problems 2.33 and 3.87]{dniester} and \cite[p.~263]{arenas-shest}). 
Proposition \ref{prop-simple} shows that in the narrower class of CD algebras
the answer is also affirmative.

Note also that Proposition \ref{prop-simple} implies that CD non-Lie algebras of
dimension $\le 5$ listed in \cite{cd-nilp1} actually exhaust \emph{all} CD 
non-Lie algebras in those dimensions.

\section{Further relations with Lie algebras}\label{sec-lie}

Let $A$ be a CD algebra. Consider the subspace $R(A)$, spanned by all maps 
$R_a$, $a \in A$, and the subalgebra $\Der(A)$ in the Lie algebra $\gl(A)$ of
all linear maps on $A$ (subject to the usual commutator of linear maps), and 
consider their formal direct sum $R(A) \oplus \Der(A)$ (``formal'', as they may
intersect, so the sum $R(A) + \Der(A)$, considered as the linear subspace of 
$\gl(A)$, is not necessarily direct; more on that below). As $A$ is a 
CD algebra, we have $[R(A),R(A)] \subseteq \Der(A)$. Moreover, because of 
(\ref{eq-rda}), it holds $[\Der(A),R(A)] \subseteq R(A)$. Thus 
$R(A) \oplus \Der(A)$ is a Lie algebra with respect to the usual commutator, 
actually a semidirect sum with $\Der(A)$ acting on $R(A)$.

This construction is completely analogous to those in the commutative case 
(cf., e.g., \cite[Definition II.1.4]{bertram}), and similar to the construction
of the structure algebra of a Jordan algebra used in the Kantor--Koecher--Tits 
construction (cf., e.g., \cite[Chapter VIII, \S 4]{J}); the construction of the 
holomorph of a Lie algebra (i.e., the semidirect sum $L \oplus \Der(L)$ for a 
Lie algebra $L$) is somewhat similar in spirit, but different, as in the 
holomorph we have $[L,L] \subseteq L$.

This construction can be modified in several ways. For example, instead of the
linear space $R(A)$ we may consider the Lie multiplication algebra $M(A)$ (again
subject to the commutator). As $M(A)$ is generated by $R(A)$, the commutation
relations in the Lie algebra $R(A) \oplus \Der(A)$ can serve as defining 
relations in the Lie algebra $M(A) \oplus \Der(A)$.

Another possibility is to consider not formal, but ``real'' direct sum, i.e.,
the Lie subalgebra of $\gl(A)$, spanned by $R(A)$ and $\Der(A)$. To consider 
this variant more thoroughly, define the \emph{Lie center} of $A$, denoted by 
$LZ(A)$, as the set of all elements $z \in A$ such that $J(a,b,z) = 0$ for any 
$a,b \in A$. Obviously, the Lie center is always a vector subspace of $A$, 
and $LZ(A) = A$ if and only if $A$ is a Lie algebra. In a sense, it serves as a
measure of ``non-Lieness'' of an algebra.

In what follows, it will be convenient to identify elements of $A$ with the 
corresponding multiplications in $R(A)$, up to the center. Namely, the kernel of
the linear map $A \to R(A)$, $a \mapsto R_a$ coincides with $Z(A)$; hence we 
have the isomorphism of vector spaces 
\begin{equation}\label{eq-i}
A/Z(A) \xrightarrow{\sim} R(A) .
\end{equation}
Due to Proposition \ref{prop-simple}, this is also an isomorphism of Lie 
algebras.

\begin{lemma}\label{lemma-jz}
For any CD algebra $A$, there is isomorphism of Lie algebras
$$
LZ(A)/Z(A) \simeq R(A) \cap \Der(A) .
$$
\end{lemma}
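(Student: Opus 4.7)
The plan is to show that the assignment $z \mapsto R_z$ induces the stated isomorphism. The key observation is a simple characterization: for $a \in A$, the right multiplication $R_a$ is a derivation of $A$ if and only if $a \in LZ(A)$. Indeed, the derivation identity $R_a(xy) = R_a(x)y + xR_a(y)$ reads $(xy)a = (xa)y + x(ya)$, which after applying anticommutativity to the rightmost term becomes $(xy)a - (xa)y + (ya)x = 0$, i.e., $J(x,y,a) = 0$ for all $x, y \in A$.

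Armed with this, I would restrict the linear surjection $A \to R(A)$, $a \mapsto R_a$ --- whose kernel is exactly $Z(A)$ --- to the subspace $LZ(A)$. Since $Z(A) \subseteq LZ(A)$ (elements of $Z(A)$ annihilate all products, so the Jacobiator trivially vanishes on them), the restriction has image $\{R_a \>|\> a \in LZ(A)\} = R(A) \cap \Der(A)$ and kernel $Z(A)$. This yields the sought linear isomorphism $LZ(A)/Z(A) \xrightarrow{\sim} R(A) \cap \Der(A)$.

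To upgrade this to an isomorphism of Lie algebras, I would observe that $R(A) \cap \Der(A)$ is a Lie subalgebra of $\gl(A)$: for $R_a, R_b$ both in $\Der(A)$, formula (\ref{eq-rda}) applied with $D = R_a$ shows that $[R_a, R_b] = R_{R_a(b)} \in R(A)$, so the commutator stays inside $R(A) \cap \Der(A)$. The map we constructed is then nothing but the restriction of the Lie algebra isomorphism (\ref{eq-i}) to $LZ(A)/Z(A)$ (which, as a consequence, inherits the structure of a Lie subalgebra of $A/Z(A)$). The principal technical point is the characterization of $LZ(A)$ via $R_a \in \Der(A)$ carried out in the first step; after that, everything else follows by restricting the already-established Lie isomorphism (\ref{eq-i}).
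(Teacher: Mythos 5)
Your proof is correct and follows essentially the same route as the paper's: both hinge on the observation that, by anticommutativity, $R_a \in \Der(A)$ if and only if $a \in LZ(A)$, and then identify $LZ(A)/Z(A)$ with its image under the isomorphism (\ref{eq-i}). You merely spell out the details (the explicit Jacobiator computation and the closure of $R(A) \cap \Der(A)$ under the bracket) that the paper leaves implicit.
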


\begin{proof}
It is obvious that $Z(A) \subseteq LZ(A)$. The condition $z \in LZ(A)$ is 
equivalent, taking into account the anticommutativity of $A$, to the condition 
$R_z \in \Der(A)$. Hence the image of $LZ(A)$ under the isomorphism (\ref{eq-i})
coincides with $R(A) \cap \Der(A)$.
\end{proof}

\begin{lemma}\label{lemma-2}
For any CD algebra $A$, $LZ(A)$ is an ideal of $A$.
\end{lemma}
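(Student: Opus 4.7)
The plan is to leverage Lemma \ref{lemma-jz} together with the defining CD property, without having to unfold the Jacobiator directly in terms of quadruple products. By Lemma \ref{lemma-jz} (more precisely, by the observation inside its proof), an element $z \in A$ belongs to $LZ(A)$ if and only if $R_z \in \Der(A)$. So the task reduces to showing that for every $z \in LZ(A)$ and every $a \in A$, the right multiplication $R_{za}$ is again a derivation of $A$; this will establish $za \in LZ(A)$, i.e., that $LZ(A)$ is a right (equivalently, by anticommutativity, two-sided) ideal.

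To produce $R_{za}$ as a derivation, I would apply the identity (\ref{eq-rda}) with the derivation $D := R_z$, obtaining
\begin{equation*}
[R_z, R_a] = R_{R_z(a)} = R_{az} = -R_{za},
\end{equation*}
where the last equality uses anticommutativity. But the CD hypothesis says precisely that $[R_z, R_a]$ is a derivation of $A$, so $R_{za}$ is a derivation of $A$, and invoking Lemma \ref{lemma-jz} once more gives $za \in LZ(A)$.

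There is no real obstacle here: the whole argument is a two-line chain, and all the work has been absorbed into Lemma \ref{lemma-jz}, the defining CD identity, and the general formula (\ref{eq-rda}). The only subtle point to be careful about is the sign bookkeeping in $R_z(a)=az=-za$, which just shows that $R_{za}$ (rather than $R_{az}$) is the derivation exhibited, and does not affect the conclusion since anticommutativity makes left and right ideals coincide.
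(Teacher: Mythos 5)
Your proof is correct and follows exactly the paper's argument: use Lemma \ref{lemma-jz} to get $R_z \in \Der(A)$, apply (\ref{eq-rda}) to compute $[R_z,R_a]=R_{az}$, and invoke the CD property to conclude this is a derivation, hence $az$ (and so $za$) lies in $LZ(A)$. The extra remark about the sign $R_{az}=-R_{za}$ is harmless and does not change anything.
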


\begin{proof}
Let $z \in LZ(A)$, and $x \in A$. By Lemma \ref{lemma-jz}, we have 
$R_z \in \Der(A)$, and then by (\ref{eq-rda}) we have 
$[R_z,R_x] = R_{R_z(x)} = R_{xz}$. Then, since $A$ is a CD algebra, 
$R_{xz} \in \Der(A)$, and hence, again by Lemma \ref{lemma-jz}, $xz \in LZ(A)$.
\end{proof}

Thus, identifying $R(A)$ with $A/Z(A)$ via isomorphism (\ref{eq-i}), the 
intersection between $R(A)$ and $\Der(A)$ is identified with $LZ(A)/Z(A)$ by
Lemma \ref{lemma-2}, and (generally, non-direct) sum $R(A) + \Der(A)$ can be
identified with the direct sum $A/LZ(A) \oplus \Der(A)$, the Lie subalgebra of
$\gl(A)$.

\section{``Naive'' cohomology}\label{sec-coho}

The goal of this section is to construct cohomology theory of CD algebras. The 
standard approach to construct cohomology in a variety of algebras is an 
operadic one: provided that the corresponding operad $\mathcal P$ is quadratic 
and Koszul, there is a small explicit cochain complex built out of the Koszul 
dual cooperad $\mathcal P^{\mbox{!`}}$. However, this works only for quadratic 
Koszul operads. The operad defined by the identity (\ref{eq-6}) is cubic. The 
standard trick, employed, for example, in the case of the Jordan operad, is to 
pass to triple systems; the corresponding category of triple systems should be 
equivalent (say, in representation-theoretic sense) to the initial category of 
binary algebras, and the corresponding operad will be (ternary) quadratic. 
However, it is not immediately clear which triple systems should correspond to 
CD algebras, and whether the corresponding operad will be Koszul (we believe it
will be not).

Thus we rely on the ``naive'' approach to cohomology. Under this, we mean an 
attempt to construct (the beginning of) the corresponding cochain complex by 
utilizing the low-degree structural interpretations of cohomology in the given 
variety: derivations, central extension, deformations, etc. We take cohomology 
of Lie algebras as a model.

For that, we need first to define what is a module over a CD algebra is. We 
follow a nowadays standard approach which goes back to Eilenberg (see, for 
example, \cite[Chapter II, \S 5]{J}). Namely, for an algebra $A$ in a given 
variety, a vector space $M$ with an $A$-action on it, is declared a module over
$A$, if the semidirect sum $A \oplus M$, where multiplication between elements 
of $A$ and $M$ is determined by the given action, and multiplication on $M$ is 
zero, belongs to the same variety. According to this approach, a vector space $M$ with a (left) action of
a CD algebra $A$, denoted by $a \bullet m$, is called a \emph{module over $A$},
if the following equality holds:
$$
(xy)a \bullet m + a \bullet ((xy) \bullet m)
- x \bullet ((ya) \bullet m) + y \bullet ((xa) \bullet m)
- x \bullet (a \bullet (y \bullet m)) + y \bullet (a \bullet (x \bullet m)) = 0
.
$$
for any $x,y,a \in A$ and $m \in V$.

As in the Lie case, $A$, considered as a (left) module over itself, is called 
the \emph{adjoint module}.

As we are interested in central extensions and deformations of CD algebras, 
we start with the second cohomology. The second cohomology is interpreted as 
equivalence classes of square-zero extensions. Namely, let $A$ be a CD algebra 
and $M$ an $A$-module, and consider the CD algebra structure on the direct sum 
of vector space $A \oplus M$, where multiplication on $A$ is given by the 
formula $x * y = xy + \varphi(x,y)$ for some bilinear map 
$\varphi: A \times A \to M$, multiplication between $A$ and $M$ is given by action of $A$ on $M$, and multiplication on $M$ is zero. Then
$\varphi$ is skew-symmetric, and
\begin{multline}\label{eq-2}
  \varphi((xy)a,b) - \varphi((xy)b,a) 
- \varphi((xa)b,y) + \varphi((xb)a,y) + \varphi((ya)b,x) - \varphi((yb)a,x)
\\
+ a \bullet \varphi(xy,b) - b \bullet \varphi(xy,a) 
- x \bullet \varphi(ya,b) + x \bullet \varphi(yb,a) 
+ y \bullet \varphi(xa,b) - y \bullet \varphi(xb,a) 
\\
- a \bullet (b \bullet \varphi(x,y)) + b \bullet (a \bullet \varphi(x,y))
- x \bullet (a \bullet \varphi(y,b)) + x \bullet (b \bullet \varphi(y,a))
- y \bullet (b \bullet \varphi(x,a)) + y \bullet (a \bullet \varphi(x,b)) 
\\ = 0
\end{multline}
for any $x,y,a,b \in A$.

The usual notion of equivalence of square-zero extensions 
$0 \to M \to \cdot \to A \to 0$ leads to the notion of trivial, or split, 
extension, which corresponds to a cocycle of the form 
\begin{equation}\label{eq-cob2}
\varphi(x,y) = \psi(xy) - x \bullet \psi(y) + y \bullet \psi(x) 
\end{equation}
for any $x,y \in A$ and some linear map $\psi: A \to M$. Thus the right-hand
side of this equality suggests the definition of the first order cocycles, what
confirms the standard interpretation of the first cohomology as outer 
derivations. 

The inner derivations of $A$, according to the general approach devised by 
Schafer (see, for example, \cite[Chapter II, \S 3]{schafer}), are defined as 
derivations lying in the Lie multiplication algebra $M(A)$. Since 
$[R_a,R_b] \in \Der(A)$ and due to (\ref{eq-rda}), $M(A)$ is linearly spanned by
linear maps of the form $R_a$ and $[R_a,R_b]$ for $a,b \in A$. On the other 
hand, by Lemma \ref{lemma-jz}, $R_a$ is a derivation of $A$ if and only if 
$a \in LZ(A)$. Thus any inner derivation of $A$ is of the form 
$R_z + \sum_i [R_{a_i},R_{b_i}]$ for some $z \in LZ(A)$ and $a_i,b_i \in A$.

Another interpretation of the second cohomology with coefficients in the adjoint
module is equivalence classes of infinitesimal deformations of an algebra. Thus,
following nowadays standard approach by Gerstenhaber, for a CD algebra $A$ 
consider a deformed algebra over the ring $K[[t]]$, with multiplication
$$
x*y = xy + \varphi_1 (x,y) t + \varphi_2(x,y) t^2 + \dots
$$
That the deformed algebra is also CD algebra, is equivalent to an (infinite) 
series of equalities, obtained by collecting coefficients by powers of $t$ in
the CD identity (\ref{eq-6}) for the multiplication $*$. The zeroth of these 
equalities (coefficients by $t^0$) coincides with the CD identity for the 
original multiplication in $A$, and thus gives nothing new. The first of these 
equalities (coefficients by $t^1$) is obtained by ignoring all terms with powers
of $t$ higher than $1$, and thus is equivalent to the CD identity in the algebra
$A \oplus At$, where $At$ is the adjoint module over $A$ with trivial 
multiplication. Hence it is equivalent to the identity (\ref{eq-2}) with $M=A$ 
and $\bullet$ being multiplication in the algebra. The further equalities 
(coefficients by $t^2$ and higher degrees) involve Massey brackets of 
$\varphi_i$'s which can be interpreted as obstructions of prolongations of an 
infinitesimal deformation to a global one, and lie in the third cohomology of 
$A$ with coefficients in the adjoint module. But as it leads to cumbersome 
formulae, and our primary interest is in the second cohomology, we will not 
pursue this further.

The condition of triviality of such deformation, i.e., the equivalence to the 
initial algebra with respect to a homomorphism of the form
$$
\psi(x) = x + \psi_1(x) t + \psi_2(x) t^2 + \dots
$$
leads to an infinite series of equalities, the first of which is
$$
\varphi_1(x,y) = \psi_1(xy) - \psi_1(x)y - x\psi_1(y) ,
$$
what is the partial case of (\ref{eq-cob2}) in the case of the adjoint module.

Putting all this together, we define the initial terms of the cochain complex
associated to a CD algebra $A$ and an $A$-module $M$:
\begin{equation}\label{eq-c}
0 \to LZ(M) \oplus (A \otimes M) \overset{\dcobound^0}\longrightarrow \C^1(A,M) 
\overset{\dcobound^1}\longrightarrow \C^2(A,M) 
\overset{\dcobound^2}\longrightarrow \C^4(A,M) .
\end{equation}
Here $LZ(M)$ is a ``Lie center'' of $M$ defined as
$$
LZ(M) = \set{m \in M}
{xy \bullet m - x \bullet (y \bullet m) + y \bullet (x \bullet m) = 0 
\text{ for any } x,y \in M} ,
$$ 
and $\C^n(A,M)$ for $n \ge 1$ is a linear space of skew-symmetric linear maps 
$\underbrace{A \times \dots \times A}_n \to M$. The differentials are defined as
follows:
$$
\dcobound^0(m)(b) = b \bullet m 
$$
for $b \in A$, $m \in LZ(M)$,
$$
\dcobound^0(a \otimes m)(b) = a \bullet (b \bullet m) - b \bullet (a \bullet m)
$$
for $a,b \in A$, $m \in M$. The ``$LZ(M)$'' component of $\dcobound^0$ is 
similar to the zeroth differential in the Chevalley--Eilenberg complex 
computing the Lie algebra cohomology, and the ``$A \otimes M$'' component is 
similar to the zeroth differential in the complex computing cohomology of 
quadratic Jordan algebras defined in \cite[\S I.3]{mccrimmon}.

Further,
$$
\dcobound^1(\varphi)(x,y) = 
\varphi(xy) - x \bullet \varphi(y) + y\bullet \varphi(x)
$$
for $\varphi \in \C^1(A,M)$, $x,y \in A$, and
\begin{multline}\label{eq-dd}
\dcobound^2(\varphi)(x,y,a,b) \\ =
  \varphi((xy)a,b) - \varphi((xy)b,a) 
- \varphi((xa)b,y) + \varphi((xb)a,y) + \varphi((ya)b,x) - \varphi((yb)a,x)
\\
+ a \bullet \varphi(xy,b) - b \bullet \varphi(xy,a) 
- x \bullet \varphi(ya,b) + x \bullet \varphi(yb,a) 
+ y \bullet \varphi(xa,b) - y \bullet \varphi(xb,a) 
\\ \hskip -2pt
- a \bullet (b \bullet \varphi(x,y)) + b \bullet (a \bullet \varphi(x,y))
- x \bullet (a \bullet \varphi(y,b)) + x \bullet (b \bullet \varphi(y,a))
- y \bullet (b \bullet \varphi(x,a)) + y \bullet (a \bullet \varphi(x,b))
\end{multline}
for $\varphi \in \C^2(A,M)$, $x,y,a,b \in A$.

The equalities $\dcobound^1 \circ \dcobound^0 = 0$ and
$\dcobound^2 \circ \dcobound^1 = 0$ are verified in a straightforward, if not a
bit cumbersome way, or just follow from the structural interpretations described
above. 

This is not the only sensible way to define cohomology of CD algebras. For 
example, one may argue that a proper notion of derivation in this context is the
following: a linear map $D:A \to A$ such that the semidirect sum $A \oplus KD$,
where multiplication between $A$ and $D$ is determined by action of $D$ on $A$.
In the variety of Lie algebras, this leads to the usual notion of derivation, 
but in the variety of CD algebras, this leads to what might be called a 
\emph{CD derivation} of a CD algebra $A$: a linear map $D:A \to A$ such that
$$
D((xy)a) - D(xy)a - D(xa)y + D(ya)x + (D(x)a)y - (D(y)a)x = 0
$$
for any $x,y,a \in A$. An inner derivation in this context is, as in the variety
of Lie algebras, just a multiplication $R_a$, $a\in A$. Accordingly, one may 
define the initial terms of the cochain complex responsible for cohomology of a
CD algebra $A$ with coefficients in an $A$-module $M$, as
$$
0 \to M \overset{\dcobound^0}\longrightarrow \C^1(A,M) 
        \overset{\dcobound^1}\longrightarrow \C^3(A,M) ,
$$
where
$$
\dcobound^0(m)(x) = x \bullet m
$$
for $m\in M$ and $x \in A$, and
\begin{equation}\label{eq-d1}
\dcobound^1(\varphi)(x,y,a) = 
  \varphi((xy)a) - a \bullet \varphi(xy) 
- y \bullet \varphi(xa) + x \bullet \varphi(ya) 
+ y \bullet (a \bullet \varphi(x)) - x \bullet (a \bullet \varphi(y))
\end{equation}
for $\varphi \in \C^1(A,M)$ and $x,y,a \in A$.

One cannot say which variant of these cochain complexes is ``better''. In that 
respect, the situation resembles those with cohomology of so-called mock-Lie 
algebras (commutative algebras satisfying the Jacobi identity), where also one 
cannot define a low-degree cohomology in a canonical and coherent way, basing on
structural interpretations (cf. \cite[\S 1]{mock-lie}). However, as we are 
interested in central extensions and deformations, we adopt the first variant of
the cochain complex, and define the second degree cohomology of a CD algebra $A$
with coefficients in an $A$-module $M$ as 
$\Homol_{CD}^2(A,M) = \Ker \dcobound^2 / \im \dcobound^1$, where $\dcobound^1$
and $\dcobound^2$ are as in (\ref{eq-c}).

One may try to generalize the formulae for differentials above as follows. Let
$n > 0$, and \linebreak $\dcobound: \C^n(A,M) \to \C^{n+2}(A,M)$ is given by 
\begin{multline*}
\dcobound(\varphi)(x,y,a_1, \dots, a_n) 
\\ = 
\sum_{i=1}^n (-1)^i \Big(
  \varphi((xy)a_i, a_1, \dots, \widehat{a_i}, \dots, a_n)
+ a_i \bullet \varphi(xy, a_1, \dots, \widehat{a_i}, \dots, a_n)
\\ \hspace{56pt}
- x \bullet \varphi(ya_i, a_1, \dots, \widehat{a_i}, \dots, a_n)
+ y \bullet \varphi(xa_i, a_1, \dots, \widehat{a_i}, \dots, a_n)
\\ \hspace{102pt}
- x \bullet (a_i \bullet \varphi(y, a_1, \dots, \widehat{a_i}, \dots, a_n))
+ y \bullet (a_i \bullet \varphi(x, a_1, \dots, \widehat{a_i}, \dots, a_n))
\Big)
\\ + \sum_{1 \le i < j \le n} (-1)^{i+j+n+1} \Big(
  \varphi(
(xa_i)a_j, y, a_1, \dots, \widehat{a_i}, \dots, \widehat{a_j}, \dots, a_n)
- \varphi(
(xa_j)a_i, y, a_1, \dots, \widehat{a_i}, \dots, \widehat{a_j}, \dots, a_n)
\\ \hspace{104pt}
- \varphi(
(ya_i)a_j, x, a_1, \dots, \widehat{a_i}, \dots, \widehat{a_j}, \dots, a_n)
+ \varphi(
(ya_j)a_i, x, a_1, \dots, \widehat{a_i}, \dots, \widehat{a_j}, \dots, a_n)
\\ \hspace{69pt}
+ a_i \bullet (a_j \bullet 
\varphi(x, y, a_1, \dots, \widehat{a_i}, \dots, \widehat{a_j}, \dots, a_n))
- a_j \bullet (a_i \bullet 
\varphi(x, y, a_1, \dots, \widehat{a_i}, \dots, \widehat{a_j}, \dots, a_n))
\Big)
\end{multline*}
for $\varphi \in \C^n(A,M)$, and $x,y,a_1, \dots, a_n \in A$.

One can prove, in the absence of analogs of Cartan formulas in the 
Chevalley--Eilenberg complex (are there ones?), by direct verification if not 
without some pain, that $\dcobound \circ \dcobound = 0$, so we get, in fact, two
complexes, which lead to what may be called ``odd'' and ``even'' CD cohomology
respectively: 
\begin{gather*}
\C^1(A,M) \overset{\dcobound}{\to} \C^3(A,M) \overset{\dcobound}{\to} 
\C^5(A,M) \overset{\dcobound}{\to} \dots  
\\
\C^2(A,M) \overset{\dcobound}{\to} \C^4(A,M) \overset{\dcobound}{\to} 
\C^6(A,M) \overset{\dcobound}{\to} \dots
\end{gather*}
The differential $\dcobound: \C^1(A,M) \to \C^3(A,M)$ here coincides with 
differential (\ref{eq-d1}), and the differential 
$\dcobound: \C^2(A,M) \to \C^4(A,M)$ coincides with differential (\ref{eq-dd}).

However, we will not pursue this topic further and in the subsequent section
will work exclusively with $\Homol_{CD}^2(A,M)$ as defined above.

\section{Central CD extensions of Lie algebras}\label{sec-ext}

In this section we discuss central CD extensions, or, in other words, 
$\Homol_{CD}^2(L,K)$, for various Lie algebras $L$ with coefficients in the trivial module $K$. According to the definition, the vector
space $\Homol_{CD}^2(L,K)$ is a quotient of \emph{CD $2$-cocycles} by 
$2$-coboundaries:
$$
\Homol_{CD}^2(L,K) = \frac{\Z_{CD}^2(L,K)}{\B^2(L,K)} .
$$

The space $\Z_{CD}^2(L,K)$ of CD $2$-cocycles consists of skew-symmetric bilinear
maps $\varphi: L \times L \to K$ satisfying the condition
\begin{equation}\label{eq-2c}
  \varphi([[x,y],a],b) - \varphi([[x,y],b],a) 
- \varphi([[x,a],b],y) + \varphi([[x,b],a],y) 
+ \varphi([[y,a],b],x) - \varphi([[y,b],a],x) = 0
\end{equation}
for any $x,y,a,b \in L$, and the space of $2$-coboundaries $\B^2(L,K)$ consists,
as in the Lie case, of bilinear maps of the form $\varphi(x,y) = \psi([x,y])$ 
for some linear map $\psi: L \to K$.

Note that for any Lie algebra $L$, the usual Chevalley--Eilenberg cohomology
$\Homol^2(L,K)$ is a subspace in the CD cohomology $\Homol_{CD}^2(L,K)$.

Our first goal is to obtain examples of an almost Lie algebra which is not
CD, promised at the end of the proof of Proposition \ref{prop-1}, by considering
one-dimensional central extensions of a Lie algebra $L$. Such central extensions
can be written as the vector space direct sum $L \oplus Kz$, where 
multiplication in $L$ is twisted by a $2$-cocycle $\varphi: L \times L \to K$,
$\{x,y\} = [x,y] + \varphi(x,y)z$, and $z$ is a central element. Such an algebra
is almost Lie for any skew-symmetric $\varphi$, while it is a CD algebra if and
only if $\varphi$ is CD $2$-cocycle. Thus any Lie algebra $L$ whose second CD 
cohomology $\Homol_{CD}^2(L,K)$ is strictly larger then its second 
Chevalley--Eilenberg cohomology $\Homol^2(L,K)$, will lead, by extending $L$ by
any CD $2$-cocycle which is not a Chevalley--Eilenberg cocycle, to an example of
a CD algebra which is not Lie. Similarly, a Lie algebra $L$, for which 
$\Homol_{CD}^2(L,K)$ is strictly smaller than the space $\C^2(L,K) / \B^2(L,K)$ 
-- which can be considered as the ``2nd almost Lie cohomology'' -- will lead to
an example of an almost Lie algebra which is not CD.

Obviously, abelian Lie algebras do not qualify for such examples, so let us look
at nonabelian Lie algebras of low dimension. Elementary calculations show that
the two-dimensional nonabelian Lie algebra, and the $3$-dimensional nilpotent 
Lie algebra do not qualify either, as for these algebras
$$
\Homol^2(L,K) = \Homol_{CD}^2(L,K) = \frac{\C^2(L,K)}{\B^2(L,K)}
$$
(all these three spaces vanish in the case where $L$ is two-dimensional 
nonabelian, and are of dimension $2$ in the case where $L$ is $3$-dimensional 
nilpotent), but the direct sum of the two-dimensional nonabelian and the one-dimensional algebra 
does qualify, as for this algebra we have
$$
\Homol^2(L,K) = \Homol_{CD}^2(L,K) \subsetneq \frac{\C^2(L,K)}{\B^2(L,K)}
$$
(the corresponding spaces being of dimensions $1$ and $2$).

Further low-dimensional solvable and nilpotent Lie algebras of dimension $3$ and
higher could provide a plethora of such examples (including the cases where 
$\Homol^2(L,K)$ and $\Homol_{CD}^2(L,K)$ do not coincide).

On the other hand, for simple Lie algebras we have

\begin{theorem}[``Second CD Whitehead lemma'']
For any simple finite-dimensional Lie algebra $L$ over a field of characteristic
zero, and any finite-dimensional $L$-module $M$, $\Homol_{CD}^2(L,M) = 0$.
\end{theorem}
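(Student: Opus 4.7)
The plan is to reduce the theorem to the classical second Whitehead lemma by comparing CD cohomology with Chevalley--Eilenberg cohomology. The CD differential $\dcobound^1$ of (\ref{eq-cob2}) coincides, up to sign, with the usual CE differential $\dcobound_{CE}\colon \C^1(L,M) \to \C^2(L,M)$, so $\dcobound_{CE} \circ \dcobound^1 = 0$, and $\dcobound_{CE}$ descends to a well-defined linear map $\overline{\dcobound}_{CE}\colon \Homol_{CD}^2(L,M) \to \C^3(L,M)$. Its kernel consists of the CD classes representable by CE $2$-cocycles, so equals $\Homol^2(L,M)$, which vanishes by classical Whitehead. Thus $\overline{\dcobound}_{CE}$ is injective, and the theorem reduces to the assertion that every CD $2$-cocycle $\varphi$ over a simple $L$ satisfies $\dcobound_{CE}\varphi = 0$ -- in other words, that every CD $2$-cocycle is automatically a Chevalley--Eilenberg $2$-cocycle.

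To establish this, I would unpack the CD cocycle identity (\ref{eq-2}) into a constraint on $\Omega := \dcobound_{CE}\varphi \in \C^3(L,M)$. The Jacobi identity in $L$ in the form $[[x,a],b] - [[x,b],a] = [x,[a,b]]$ combines the third through sixth of the six bare $\varphi$-terms of (\ref{eq-2}); a parallel combination of the first two $\varphi$-terms, together with a careful cancellation of the twelve module terms (using the Lie-module axiom $[a,b]\bullet m = a\bullet(b\bullet m) - b\bullet(a\bullet m)$), should reduce (\ref{eq-2}) to the single identity
\begin{equation*}
\Omega([x,y],a,b) + \Omega(x,y,[a,b]) = 0 \qquad \text{for all } x,y,a,b \in L.
\end{equation*}
In the trivial-coefficient case this derivation is explicit and clean. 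Setting $(a,b) = (x,y)$ and using the total antisymmetry of $\Omega$ yields the polarized specialization $\Omega(x,y,[x,y]) = 0$.

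It then remains to deduce $\Omega \equiv 0$ from this identity, and this is the main technical obstacle. By Weyl's complete reducibility I reduce to the case where $M$ is irreducible. For nontrivial irreducible $M$, the standard Whitehead-type vanishing $\Homol^n(L,M) = 0$ in all positive degrees (Casimir plus Cartan homotopy) places $\Omega$ in $\B^3_{CE}(L,M) = \Z^3_{CE}(L,M)$; combining the $L$-equivariance of the constraint with Schur's lemma and the decomposition of $\C^3(L,M)$ under the Casimir action should then force $\Omega = 0$. When $M = K$ is trivial, the Casimir argument degenerates; here one must argue directly that the $L$-equivariant linear map $\Omega \mapsto \bigl((u \wedge v, x \wedge y) \mapsto \Omega(u,v,[x,y]) + \Omega(x,y,[u,v])\bigr)$ from $\Lambda^3 L^*$ to $\mathrm{Sym}^2(\Lambda^2 L)^*$ is injective on the subspace of CE $3$-coboundaries for simple $L$ -- a representation-theoretic assertion reducible, via complete reducibility, to comparing the $L$-module decompositions of $\Lambda^3 L$ and $\Lambda^2 L \otimes \Lambda^2 L$.
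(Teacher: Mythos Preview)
Your reduction is sound: the CD $1$-differential agrees with the Chevalley--Eilenberg one, so $\overline{\dcobound}_{CE}$ descends to $\Homol_{CD}^2(L,M)$, its kernel is exactly $\Homol^2(L,M)$, and classical Whitehead kills that. Your derivation of the constraint $\Omega([x,y],a,b)+\Omega(x,y,[a,b])=0$ on $\Omega=\dcobound_{CE}\varphi$ is also correct in the trivial-coefficient case (and I believe the module-coefficient version goes through as you suggest, though you have not written it out).

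The genuine gap is the last step: you never actually prove that this constraint forces $\Omega=0$. For nontrivial irreducible $M$ you invoke ``Casimir plus Schur'' and ``decomposition of $\C^3(L,M)$ under the Casimir action'', but all the Casimir argument gives you is that $\Omega$ is a coboundary, which you already knew; it does not by itself single out $\Omega=0$ among coboundaries. For $M=K$ you are even more candid that what remains is an unproved injectivity statement about an $L$-equivariant map $\Lambda^3 L^* \to \mathrm{Sym}^2(\Lambda^2 L)^*$. That injectivity may well hold, but it is a nontrivial representation-theoretic fact about simple Lie algebras that would need a real argument (the space $\B^3(L,K)$ has dimension $\binom{\dim L}{2}-\dim L$, so this is not a small check). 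As written, the proof is a plausible outline whose decisive step is missing.

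By contrast, the paper bypasses all of this cochain analysis entirely. It simply invokes Grishkov's structural result \cite[Theorem~6]{grishkov}: over a field of characteristic zero, every solvable (in particular, abelian) extension of a finite-dimensional simple Lie algebra in the variety of binary Lie algebras splits. Since CD algebras are binary Lie, every square-zero CD extension of $L$ by $M$ splits, which is exactly $\Homol_{CD}^2(L,M)=0$. This is a one-line argument once Grishkov's theorem is available; your approach, if completed, would be more self-contained but substantially longer, and would essentially be re-proving a special case of Grishkov's theorem by hand.
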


\begin{proof}
This follows at once from \cite[Theorem 6]{grishkov}. Indeed, it is proved there
that any solvable (and hence abelian) extension of $L$ in the variety of binary
Lie algebras (and hence in the variety of CD algebras) splits.
\end{proof}

In the positive characteristic, we merely have a 

\begin{conjecture}
For any simple finite-dimensional Lie algebra $L$ over a field of characteristic
$\ne 2,3$, $\Homol_{CD}^2(L,K) = \Homol^2(L,K)$.
\end{conjecture}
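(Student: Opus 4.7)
The plan is to reformulate the CD 2-cocycle condition (\ref{eq-2c}) as a constraint on the 3-cochain $\omega := \dcobound\varphi \in \C^3(L,K)$, and then to exploit the fact that $\omega$ is automatically a Chevalley--Eilenberg 3-cocycle (since $\dcobound^2 = 0$) together with simplicity in the weaker form $L = [L,L]$, in order to force $\omega = 0$. Once $\omega = 0$, $\varphi$ is a CE cocycle, so $\Z_{CD}^2(L,K) \subseteq \Z^2(L,K)$; the reverse inclusion is immediate, and the conjecture follows.

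For the reformulation, I would expand (\ref{eq-2c}) by applying the Jacobi identity in $L$: the third and fourth summands collapse to $-\varphi([x,[a,b]],y)$ and the fifth and sixth to $\varphi([y,[a,b]],x)$, and the remaining combinations assemble, after a careful sign check, into
\begin{equation}\label{eq-plan-cd}
  \omega([x,y],a,b) + \omega(x,y,[a,b]) = 0 \qquad \text{for all } x,y,a,b \in L .
\end{equation}
Since $L = [L,L]$, (\ref{eq-plan-cd}) forces $\omega(x,y,d)$ to depend on $(x,y)$ only through $[x,y]$, so there exists a well-defined bilinear form $B : L \times L \to K$ with $\omega(x,y,w) = B([x,y],w)$. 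Full antisymmetry of $\omega$ translates into $B$ being skew and satisfying the cyclic identity $B([x,y],w) = B([y,w],x) = B([w,x],y)$.

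Now I would use $\dcobound\omega = 0$. Expand this 3-cocycle identity at $(x,y,a,b)$, substitute $\omega(u,v,w) = B([u,v],w)$, and apply Jacobi in $L$: the three terms whose final slot is $b$ cancel outright, the two terms whose final slot is $a$ collapse into $-B([[x,y],b],a)$, and what remains is
$$
  B([[x,y],b],a) + B([[a,b],x],y) = 0.
$$
Setting $u = [x,y]$ and $v = [a,b]$, the cyclic and skew identities for $B$ rewrite each summand as $B(u,v)$, giving $2B(u,v) = 0$. As the characteristic is different from $2$ and $L = [L,L]$ makes $u, v$ range over all of $L$, this forces $B \equiv 0$, whence $\omega = 0$.

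What would be the obstacle? If the argument goes through as sketched, it in fact proves the conjecture for every perfect Lie algebra over a field of characteristic $\ne 2$, which is strictly stronger than stated and does not need the hypothesis char $\ne 3$. The main danger is purely computational: the two sign-sensitive reductions -- the collapse of (\ref{eq-2c}) to (\ref{eq-plan-cd}), and the Jacobi-based collapse of $\dcobound\omega = 0$ to the two-term identity displayed above -- are routine but easy to mis-sign, so the real work is a careful verification of these two steps, ideally cross-checked on $L = \mathfrak{sl}_2$ before claiming the general result.
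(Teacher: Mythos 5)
There is nothing in the paper to compare your argument against: the statement is posed explicitly as a conjecture, supported only by computer calculations, and no proof is offered. So the only relevant question is whether your argument is sound, and after checking both sign-sensitive reductions I could not find a gap. Writing $\omega$ for the Chevalley--Eilenberg coboundary of $\varphi$, the left-hand side of (\ref{eq-2c}) does equal $\omega([x,y],a,b)+\omega(x,y,[a,b])$: the terms $\varphi([a,b],[x,y])+\varphi([x,y],[a,b])$ cancel by skew-symmetry of $\varphi$, and the Jacobi identity accounts for the remaining four summands. Your final two-term identity $B([[x,y],b],a)+B([[a,b],x],y)=0$ is also correct as displayed (your in-text remark that the $a$-terms collapse to $-B([[x,y],b],a)$ carries the wrong sign, but the displayed equation and the conclusion do not). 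One remark on where the content actually lies: for an arbitrary alternating trilinear $\omega$, the closedness condition $\dcobound\omega=0$ constrains only the symmetric part of the form $F(x\wedge y,a\wedge b):=\omega([x,y],a,b)$ on $\Lambda^2 L$, and that symmetric part is already killed by the CD condition, so closedness by itself is vacuous here. The genuine input is the well-definedness of $B$, i.e., that $\omega(\cdot,\cdot,d)$ annihilates the kernel of the bracket map $\Lambda^2 L\to L$ (this is where perfectness enters); it is this that lets you push the Jacobi identity inside the first slot of $B$ and extract the \emph{symmetry} of $F$ from the otherwise empty closedness relation. Symmetry plus the antisymmetry coming from the CD condition gives $2F=0$, hence $F=0$, hence $\omega=0$ in characteristic $\ne 2$.

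If correct, the argument proves strictly more than conjectured: $\Z_{CD}^2(L,K)=\Z^2(L,K)$, and hence $\Homol_{CD}^2(L,K)=\Homol^2(L,K)$, for \emph{every} perfect Lie algebra over a field of characteristic $\ne 2$, with no simplicity, finite-dimensionality, or characteristic $\ne 3$ hypothesis. That a published conjecture would fall to a short elementary computation is a reason for extra diligence rather than for doubting any particular step; before claiming the result you should carry out exactly the cross-checks you propose, preferably on perfect but non-simple Lie algebras with nontrivial $\Homol^2$ and on small-characteristic simple algebras, where a counterexample to the stronger statement would be most likely to hide.
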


The conjecture is supported by computer calculations for algebras of small 
dimension.

\section{Further questions}

1)
How ``far'' a CD algebra can be from Lie algebras? To start with, describe CD 
algebras $A$ with the Lie center ``as small as possible'', i.e., satisfying the
condition $LZ(A) = Z(A)$.

\smallskip

2) 
Which ``interesting'' Lie algebras can be realized as constructions described
in \S \ref{sec-lie}?

\smallskip

3) Study free CD algebras. Are they central extensions of free Lie algebras?

\smallskip

4)
For an algebra $A$, its minus-algebra $A^{(-)}$ is an algebra defined on the
same underlying vector space $A$ subject to multiplication given by the
commutator $[x,y] = xy-yx$. If the minus-algebra of an algebra $A$ belongs to
a (necessary anticommutative) variety $\mathcal V$, then $A$ is called 
$\mathcal V$-admissible. Most of the distinguished varieties of algebras have 
they ``admissible'' counterparts: thus, associative algebras are Lie-admissible,
alternative algebras are Malcev-admissible, and binary Lie algebras are 
assocyclic-admissible, where the variety of assocyclic algebras is defined by 
the identity
$$
(x,y,z) = (z,x,y) ,
$$
$(x,y,z) = (xy)z - x(yz)$ being the associator of elements $x,y,z$. 

In such situation arises the question whether each algebra in an anticommutative
variety $\mathcal V$ is special, i.e., can be embedded into an algebra of the 
form $A^{(-)}$, where $A$ is $\mathcal V$-admissible algebra. Thus, Lie algebras
are special due to the celebrated Poincar\'e--Birkhoff--Witt theorem; the 
speciality of Malcev algebras was a long-standing problem whose negative 
solution was announced recently by Ivan Shestakov; and binary Lie algebras are 
not necessarily special either, see \cite{arenas-shest}.

What would be a natural variety of CD admissible algebras? (One possible 
general approach in operadic language to such sort of questions is described in
\cite{kolesnikov}). Would CD algebras be special with respect to that variety? 

\smallskip

5)
Study representations of CD algebras. Does an analog of the Ado theorem hold,
i.e., whether each finite-dimensional CD algebra admits a faithful 
finite-dimensional representation?

\smallskip

6) 
The classical Lie theory establishes correspondence between Lie groups and Lie 
algebras. This correspondence has been generalized to Malcev algebras, Bol 
algebras, Lie triple systems, Sabinin algebras, etc. (see, for example, 
\cite{gpi} and references therein). In these ``generalized Lie'' correspondences
Lie groups are replaced by various kinds of analytic loops; thus, binary Lie 
algebras correspond to diassociative loops (i.e., the subloop generated by any 
two elements is a group). Which loops would correspond to CD algebras? One of 
the possible approaches to this question would be to find the class of loops 
corresponding to almost Lie algebras, and then, according to (\ref{eq-eq}), take
the intersection of that class with the class of diassociative loops.

\smallskip

7)
Let us drop the commutativity and anticommutativity conditions altogether, and
consider the variety of algebras defined just by the properties that the 
commutator of any two left or right multiplications is a derivation (i.e.,
taking all possible combinations of left/right multiplications, we get $3$ 
defining identities, see, for example, \cite[\S 1]{kk}). Is this variety of 
algebras amenable to study? Low-dimensional nilpotent algebras in this variety 
were classified in \cite{kk} and \cite{kk2}.

\section*{Acknowledgements}

Thanks are due to Askar Dzhumadil'daev, Alexander Grishkov, and 
Jos\'e Mar\'ia P\'erez-Izquierdo for useful discussions. GAP \cite{gap} and 
Albert~\cite{albert} were used to check some of the computations performed in 
this paper. This work was started when Zusmanovich was visiting Federal University of ABC, supported by FAPESP, grant number 18/00726-5.

\renewcommand{\refname}{Software}


\begin{thebibliography}{ACK}

\bibitem[ACF]{cd-nilp1} H. Abdelwahab, A.J. Calder\'on, and A. Fern\'andez Ouaridi,
\emph{Central extensions of 4-dimensional binary Lie algebras},
Rocky Mountain J. Math. \textbf{50} (2020), no.5, 1541--1559.

\bibitem[ACK]{cd-nilp} \bysame, \bysame, and I. Kaygorodov,
\emph{The algebraic and geometric classification of nilpotent binary Lie algebras},
Intern. J. Algebra Comput. \textbf{29} (2019), no.6, 1113--1129.

\bibitem[AS]{arenas-shest} M. Arenas and I. Shestakov,
\emph{On speciality of binary-Lie algebras}, 
J. Algebra Appl. \textbf{10} (2011), no.2, 257--268.

\bibitem[B]{bertram} W. Bertram, 
\emph{The Geometry of Jordan and Lie Structures}, 
Lect. Notes Math. \textbf{1754} (2000).

\bibitem[Dn]{dniester}
\emph{Dniester Notebook: Unsolved Problems in the Theory of Rings and Modules}
(ed. V.T. Filippov, V.K. Kharchenko, and I.P. Shestakov),
4th ed., Novosibirsk, 1993 (in Russian);
Non-Associative Algebra and Its Applications,
Lect. Notes Pure Appl. Math., Chapman \& Hall/CRC, Vol. 246, 2006, 461--516
(English translation).

\bibitem[Dz]{dzhu-comm} A. Dzhumadil'daev, 
\emph{Assosymmetric algebras under Jordan product},
Comm. Algebra \textbf{46} (2018), no.1, 129--142.

\bibitem[F]{filippov} V.T. Filippov, \emph{Simple Sagle algebras},
Mat. Zametki \textbf{65} (1999), no.4, 607--611 (in Russian);
Math. Notes \textbf{65} (1999), no.4, 506--509 (English translation).

\bibitem[G]{grishkov} A.N. Grishkov, 
\emph{Structure and representations of binary-Lie algebras},
Izv. Akad. Nauk SSSR Ser. Matem. \textbf{44} (1980), no.5, 999-1030 
(in Russian); 
Math. USSR Izv. \textbf{17} (1981), no.2, 243--269 (English translation).

\bibitem[GP]{gpi} \bysame{} and J.M. Pérez-Izquierdo, 
\emph{Lie's correspondence for commutative automorphic formal loops},  
Lin. Algebra Appl. \textbf{544} (2018), 460--501.

\bibitem[HP]{hp} I.R. Hentzel and L.A. Peresi, \emph{Almost Jordan rings},
Proc. Amer. Math. Soc. \textbf{104} (1988), no.2, 343--348.

\bibitem[J]{J} N. Jacobson,
\emph{Structure and Representations of Jordan Algebras}, AMS, 1968.

\bibitem[JR]{jord-ruhaak} P. Jordan and H. R\"uhaak,
\emph{\"Uber einen Zusammenhang der Lie-Tripel-Algebren mit den Osborn-Algebren},
Akad. Wiss. Lit. Mainz Abh. Math.-Natur. Kl. 1969, no.3, 49--54.

\bibitem[KK1]{kk} I. Kaygorodov and M. Khrypchenko, 
\emph{The algebraic classification of nilpotent $\mathfrak{CD}$-algebras},
Lin. Multilin. Algebra, to appear; arXiv:2006.00734.

\bibitem[KK2]{kk2} \bysame{} and \bysame{}, 
\emph{The geometric classification of nilpotent $\mathfrak{CD}$-algebras},
J. Algebra Appl., to appear; arXiv:2007.01829.

\bibitem[K]{kolesnikov} P.S. Kolesnikov, 
\emph{Commutator algebras of pre-commutative algebras},
Matematicheskii Zhurnal (Almaty) \textbf{16} (2016), no.2 
(Dzhumadil'daev's Festshrift), 145--158.

\bibitem[M]{mccrimmon} K. McCrimmon, 
\emph{Representations of quadratic Jordan algebras},
Trans. Amer. Math. Soc. \textbf{153} (1971), 279--305.

\bibitem[Sa1]{sagle-malcev} A.A. Sagle, \emph{Malcev algebras},
Trans. Amer. Math. Soc. \textbf{101} (1961), no.3, 426--458.

\bibitem[Sa2]{sagle-binlie} \bysame, 
\emph{On simple extended Lie algebras over fields of characteristic zero},
Pacific J. Math. \textbf{15} (1965), no.2, 621--648.

\bibitem[Sch]{schafer} R.D. Schafer,
\emph{An Introduction to Nonassociative Algebras}, Academic Press, 1966.

\bibitem[Si]{sidorov} A.V. Sidorov, \emph{Lie triple algebras},
Algebra Logika \textbf{20} (1981), no.1, 101--108 (in Russian);
Algebra and Logic \textbf{20} (1981), no.1, 72--78 (English translation).

\bibitem[Z]{mock-lie} P. Zusmanovich, 
\emph{Special and exceptional mock-Lie algebras},
Lin. Algebra Appl. \textbf{518} (2017), 79--96; arXiv:1608.05861.

\end{thebibliography}

\begin{thebibliography}{GAP}

\bibitem[A]{albert} \emph{Albert}; 
\texttt{https://web.osu.cz/$\sim$Zusmanovich/soft/albert/ }

\bibitem[GAP]{gap}
The GAP Group, \emph{GAP -- Groups, Algorithms, and Programming}, 
Version 4.10.2, 2019; \newline
\texttt{https://www.gap-system.org/}

\end{thebibliography}
\end{document}